\newtheorem{theorem}{Theorem}[section]
\newtheorem{corollary}{Corollary}[section]
\newtheorem{lemma}{Lemma}[section]
\newtheorem{proposition}{Proposition}[section]
\theoremstyle{definition}
\numberwithin{equation}{section}
\begin{document}
\title[Fenchel Subdifferential Operators]{Fenchel Subdifferential Operators:
A Characterization without Cyclic Monotonicity}
\author[J. E. Mart\'{\i}nez-Legaz]{ Juan Enrique Mart\'{\i}nez-Legaz$^{1}$}
\maketitle

\setcounter{page}{1}

\vspace*{1.0cm}

\vspace*{-0.6cm}

\begin{center}
{\footnotesize \textit{$^{1}$ Departament d'Economia i d'Hist\`{o}ria Econ%
\`{o}mica, Universitat Aut\`{o}noma de Barcelona, and BGS\ Math, Spain\\[0pt]
}}
\end{center}

\vskip4mm {\small \noindent \textbf{Abstract.} Fenchel subdifferential
operators of lower semicontinuous proper convex functions on real Banach
spaces are classically characterized as those operators that are maximally
cyclically monotone or, equivalently, maximally monotone and cyclically
monotone. This paper presents an alternative characterization, which does
not involve cyclic monotonicity. In the case of subdifferential operators of
sublinear functions, the new characterization substantially simplifies.
Dually, the new characterization of normal cone operators is very simple,
too.}

{\small \vskip1mm \noindent \textbf{Keywords.} Fenchel subdifferential,\
monotone operator,\ normal cone, convexity,\ sublinear function}

\renewcommand{\thefootnote}{} \footnotetext{%
E-mail addres: JuanEnrique.Martinez.Legaz@uab.cat.
\par
Received}

\section{Introduction}

\label{intro}

The Fenchel subdifferential is arguably the most fundamental notion in
convex analysis. The Fenchel subdifferential operator of a functional $%
f:X\rightarrow \mathbb{R\cup }\left\{ +\infty \right\} $ defined on a real
Banach space $X$ is%
\begin{eqnarray*}
&&%
\begin{array}{c}
\qquad \qquad \qquad \qquad%
\end{array}%
\begin{array}{c}
\partial f:X\rightrightarrows X^{\ast }%
\end{array}%
\begin{array}{c}
\qquad \qquad \qquad \qquad%
\end{array}
\\
&&%
\begin{array}{c}
\partial f\left( x\right) :=%
\end{array}%
\left\{ x^{\ast }\in X^{\ast }:f\left( y\right) \geq f\left( x\right)
+\left\langle y-x,x^{\ast }\right\rangle \text{ }\forall y\in X\right\} ;
\end{eqnarray*}%
here and in the sequel, $X^{\ast }$ is the dual space of $X$ and $%
\left\langle \cdot ,\cdot \right\rangle :X\times X^{\ast }\rightarrow 
\mathbb{R}$ denotes the duality product, that is, $\left\langle x,x^{\ast
}\right\rangle $ means the value of the continuous linear functional $%
x^{\ast }\in X^{\ast }$ at $x\in X.$ As is well known and easy to prove, $%
\partial f$ is cyclically monotone. Recall that a set-valued operator $%
T:X\rightrightarrows X^{\ast }$ is said to be cyclically monotone if%
\begin{eqnarray}
\dsum\limits_{i=0}^{k}\left\langle x_{i}-x_{i+1},x_{i}^{\ast }\right\rangle
&\geq &0\qquad \text{for every }\left( x_{i},x_{i}^{\ast }\right) \in T\quad
\left( i=0,1,...,k\right) ,  \label{cyclic} \\
&&%
\begin{array}{c}
\qquad%
\end{array}%
\begin{array}{c}
\text{with }k\geq 1\text{ arbitrary and }x_{k+1}:=x_{0};%
\end{array}
\notag
\end{eqnarray}%
here and throughout the whole paper, operators are identified with their
graphs, so that $\left( x,x^{\ast }\right) \in T$ means $x^{\ast }\in
T\left( x\right) .$ Every cyclically monotone operator is monotone, since
monotonicity corresponds to the case when $k=1$ in (\ref{cyclic}). Cyclic
monotonicity is closely connected to subdifferential operators, as
Rockafellar \cite[Theorem 1]{R66} proved that, given $T:X\rightrightarrows
X^{\ast },$ in order that there exist a proper convex functional $%
f:X\rightarrow \mathbb{R\cup }\left\{ +\infty \right\} $ such that $%
T\subseteq \partial f$, it is necessary and sufficient that $T$ be
cyclically monotone. Consequently, if $T$ is maximally (cyclically)
monotone, which means that $T$ is (cyclically) monotone and not properly
contained in any other (cyclically) monotone operator, then $T=\partial f.$
On the other hand, subdifferential operators of lower semicontinuous
(l.s.c., in brief) proper convex functionals are maximally monotone \cite[%
Theorem B]{R70a};\ therefore, one concludes that $T:X\rightrightarrows
X^{\ast }$ is the subdifferential operator of some l.s.c. proper convex
functional if and only if it is maximally (cyclically) monotone \cite[%
Theorem B]{R70a}. An extension of this characterization to suitably defined
subdifferentials of convex operators was obtained by Kusraev \cite{K78}.

The aim of this paper is to obtain an alternative characterization of
subdifferential operators not involving cyclic monotonicity. This is
achieved in Theorem \ref{main}. However, as one may expect, the new
characterization is not as simple and elegant as the one in \cite[Theorem B]%
{R70a}. It still involves maximal monotonicity, but the somewhat complicated
conditions i) - iii) of Proposition \ref{basic 3}, which replace cyclic
monotonicity, make the new characterization less attractive than the
classical one. By sharp contrast, in the case of subdifferential operators
of sublinear functionals, the new characterization, which does not involve
cyclic monotonicity either, is extremely simple and has a very easy proof.
Furthermore, since normal cones of closed convex sets are the
subdifferentials of their indicator functionals and the latter functionals
are the conjugates of the corresponding support functionals, which
characterize sublinear functionals, one easily obtains a simple
characterization of normal cone operators (Theorem \ref{normal cones}),
because subdifferentials of mutually conjugate functionals are inverse to
each other.

The rest of this paper is structured as follows. Section 2 contains
characterizations of normal cone operators and subdifferential operators of
l.s.c. proper sublinear functionals, and Section 3 characterizes
subdifferential operators of general l.s.c. proper convex functionals.

The notation and terminology used in the paper is mostly standard, but it is
explained here for the reader's convenience. The zero elements in $X$ and $%
X^{\ast }$ are denoted $0_{X}$ and $0_{X^{\ast }},$ repectively. The
projection of $X\times X^{\ast }$ onto $X^{\ast }$ is%
\begin{eqnarray*}
&&%
\begin{array}{c}
\Pi _{X^{\ast }}:X\times X^{\ast }\rightarrow X^{\ast }%
\end{array}
\\
&&%
\begin{array}{c}
\Pi _{X^{\ast }}\left( x,x^{\ast }\right) :=x^{\ast }.%
\end{array}%
\end{eqnarray*}%
The bidual space of $X$ is the dual $X^{\ast \ast }$ of $X^{\ast }.$ The
restriction of a functional $g:X^{\ast \ast }\rightarrow \mathbb{R\cup }%
\left\{ +\infty \right\} $ to $X$ (canonically identified with a subset of $%
X^{\ast \ast }$) is denoted $g_{|X}.$ The domain and the range of an
operator $T:X\rightrightarrows X^{\ast }$ are%
\begin{equation*}
dom~T:=\left\{ x\in X:T\left( x\right) \neq \emptyset \right\}
\end{equation*}%
and%
\begin{equation*}
range~T:=\dbigcup\limits_{x\in X}T\left( x\right) ,
\end{equation*}%
respectively. The inverse operator of $T$ is%
\begin{eqnarray*}
&&%
\begin{array}{c}
T^{-1}:X^{\ast }\rightrightarrows X%
\end{array}
\\
&&%
\begin{array}{c}
T^{-1}\left( x^{\ast }\right) :=\left\{ x\in X:x^{\ast }\in T\left( x\right)
\right\} .%
\end{array}%
\end{eqnarray*}%
The closure and the convex hull of a subset $C$ of a real Banach space $X$
are denoted $cl~C$ and $conv~C,$ repectively. Its barrier cone, its
recession cone and its indicator functional are%
\begin{equation*}
barr\left( C\right) :=\left\{ x^{\ast }\in X^{\ast }:\sup_{x\in
C}\left\langle x,x^{\ast }\right\rangle <+\infty \right\} ,
\end{equation*}%
\begin{equation*}
0^{+}\left( C\right) :=\left\{ d\in X:C+\mathbb{R}_{+}d=C\right\} ,
\end{equation*}%
and%
\begin{eqnarray*}
&&%
\begin{array}{c}
\delta _{C}:X\rightarrow \mathbb{R\cup }\left\{ +\infty \right\}%
\end{array}
\\
&&%
\begin{array}{c}
\delta _{C}\left( x\right) :=\left\{ 
\begin{array}{c}
0\text{\qquad if }x\in C, \\ 
+\infty \text{\qquad if }x\notin C,%
\end{array}%
\right.%
\end{array}%
\end{eqnarray*}%
respectively. The normal cone operator to $C$ is $N_{C}:=\partial \delta
_{C}.$ If $C\neq \emptyset ,$ its support functional is%
\begin{eqnarray*}
&&%
\begin{array}{c}
\sigma _{C}:X\rightarrow \mathbb{R\cup }\left\{ +\infty \right\}%
\end{array}
\\
&&%
\begin{array}{c}
\sigma _{C}\left( x^{\ast }\right) :=\sup_{x\in C}\left\langle x,x^{\ast
}\right\rangle .%
\end{array}%
\end{eqnarray*}%
In the case when $X$ is the dual of another real Banach space $Y,$ the
support functional $\sigma _{C}$ is defined on the bidual $Y^{\ast \ast },$
since $X^{\ast }=Y^{\ast \ast }$ in such a case. In the same way, in such a
situation $\partial \sigma _{C}$ is a set-valued operator from $X^{\ast }$
into $X^{\ast \ast }.$ The epigraph of a functional $f:X\rightarrow \mathbb{%
R\cup }\left\{ +\infty \right\} $ is the set%
\begin{equation*}
epi~f:=\left\{ \left( x,\alpha \right) \in X\times \mathbb{R}:f\left(
x\right) \leq \alpha \right\} .
\end{equation*}%
A functional $s:X\rightarrow \mathbb{R\cup }\left\{ +\infty \right\} $ is
said to be sublinear if it is convex and positively homogeneous, the latter
property meaning that for $x\in s^{-1}\left( \mathbb{R}\right) $ and $%
\lambda \geq 0$ one has $s\left( \lambda x\right) =\lambda s\left( x\right)
. $ Clearly, if $s$ is proper, then $s\left( 0\right) =0.$

The classical reference on convexity in finite dimension is Rockafellar's
book \cite{R70}. Convexity in Banach spaces has been the subject of many
excellent monographs, including \cite{BP78}, \cite{Z02} and the very recent 
\cite{MN22};\ the latter two books also consider functionals defined on real
locally convex topological vector spaces. Concerning monotonicity and its
close relationship with convexity, the interested reader may consult, for
instance, \cite{S08}, \cite{BI08} and, for operators defined on Hilbert
spaces, the more recent \cite{BC17}.

\vspace*{0.2cm}

\section{Normal Cone Operators of Closed Convex Sets and Subdifferentials of
Sublinear Functionals}

\label{sec::relaxed}

This section contains new and simple characterizations of normal cone
operators of closed convex sets and subdifferential operators of l.s.c.
sublinear functionals. The first result gives a simple sufficient condition
for a monotone operator to be contained in the normal cone operator of some
closed convex set.{\newline
}

\begin{proposition}
\label{basic}If $T:X\rightrightarrows X^{\ast }$ is monotone and $0_{X^{\ast
}}\in \dbigcap\limits_{x\in dom~T}T\left( x\right) ,$ then%
\begin{equation}
T\subseteq N_{cl~conv~dom~T}.  \label{incl}
\end{equation}
\end{proposition}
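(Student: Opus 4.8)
The plan is to fix an arbitrary pair $\left(x,x^{\ast}\right)\in T$ and show directly that $x^{\ast}\in N_{cl~conv~dom~T}\left(x\right)$, writing $C:=cl~conv~dom~T$ for brevity. Since $\left(x,x^{\ast}\right)\in T$ forces $x\in dom~T\subseteq C$, one has $\delta_{C}\left(x\right)=0$, so unwinding the definition $N_{C}=\partial\delta_{C}$ shows that $x^{\ast}\in N_{C}\left(x\right)$ is equivalent to the single requirement
\begin{equation*}
\left\langle y-x,x^{\ast}\right\rangle \leq 0\qquad\text{for all }y\in C.
\end{equation*}
Indeed, the defining inequality $\delta_{C}\left(y\right)\geq\delta_{C}\left(x\right)+\left\langle y-x,x^{\ast}\right\rangle$ is vacuous when $y\notin C$ (there $\delta_{C}\left(y\right)=+\infty$) and collapses to the display above when $y\in C$ (there $\delta_{C}\left(y\right)=0$). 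Thus the whole task reduces to establishing this inequality over $C$.

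First I would prove it for $y\in dom~T$. By the standing hypothesis $0_{X^{\ast}}\in T\left(y\right)$, i.e. $\left(y,0_{X^{\ast}}\right)\in T$, and I also have $\left(x,x^{\ast}\right)\in T$; applying the monotonicity of $T$ to these two pairs gives
\begin{equation*}
\left\langle x-y,x^{\ast}-0_{X^{\ast}}\right\rangle \geq 0,
\end{equation*}
which is exactly $\left\langle y-x,x^{\ast}\right\rangle \leq 0$. This one-line computation is the only place where both the monotonicity of $T$ and the assumption $0_{X^{\ast}}\in\bigcap_{x\in dom~T}T\left(x\right)$ are used.

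The remaining two steps are routine closure arguments. To pass from $dom~T$ to $conv~dom~T$, I would take an arbitrary convex combination $y=\sum_{i}\lambda_{i}y_{i}$ with $y_{i}\in dom~T$ and, using $\sum_{i}\lambda_{i}=1$, expand $\left\langle y-x,x^{\ast}\right\rangle =\sum_{i}\lambda_{i}\left\langle y_{i}-x,x^{\ast}\right\rangle \leq 0$ by linearity of $x^{\ast}$ together with the previous step. To pass from $conv~dom~T$ to its closure $C$, I would invoke continuity of the functional $y\mapsto\left\langle y-x,x^{\ast}\right\rangle$: a nonpositive quantity stays nonpositive in the limit, so the inequality survives passing to the closure. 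Combining the three steps yields the display for every $y\in C$, hence $x^{\ast}\in N_{C}\left(x\right)$, and since $\left(x,x^{\ast}\right)\in T$ was arbitrary this gives the inclusion (\ref{incl}).

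I do not expect any serious obstacle: the mathematical content is entirely contained in the single monotonicity computation, and the rest is the standard linearity-plus-continuity extension of an inequality from a set to its closed convex hull. The only point requiring a little care is the reformulation of $N_{C}\left(x\right)$ through $\delta_{C}$, in particular the observation that the subgradient inequality for $\partial\delta_{C}$ holds automatically outside $C$ and therefore needs to be checked only on $C$ itself.
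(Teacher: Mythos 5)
Your proposal is correct and follows essentially the same route as the paper: the same single monotonicity computation applied to $\left( x,x^{\ast }\right) $ and $\left( y,0_{X^{\ast }}\right) $ for $y\in dom~T$, followed by extension of the inequality to $cl~conv~dom~T$ (the paper phrases this as the right-hand side being a closed convex set containing $dom~T$, where you spell out the convex-combination and limit steps). No gaps.
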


\begin{proof}
Let $\left( x,x^{\ast }\right) \in T$. For every $y\in dom~T,$ we have $%
0_{X^{\ast }}\in T\left( y\right) ;$ hence, by the monotonicity of $T,$ we
get $\left\langle y-x,x^{\ast }\right\rangle \leq 0.$ Thus,%
\begin{equation}
dom~T\subseteq \left\{ y\in X:\left\langle y-x,x^{\ast }\right\rangle \leq
0\right\} .  \label{incl 2}
\end{equation}%
Since the right hand side in (\ref{incl 2}) is a closed convex set, it
immediately follows that%
\begin{equation*}
cl~conv~dom~T\subseteq \left\{ y\in X:\left\langle y-x,x^{\ast
}\right\rangle \leq 0\right\} ,
\end{equation*}%
which, in view of $x\in dom~T\subseteq cl~conv~dom~T,$ implies that $x^{\ast
}\in N_{cl~conv~dom~T}\left( x\right) ,$ that is, $\left( x,x^{\ast }\right)
\in N_{cl~conv~dom~T}.$ This proves (\ref{incl}).
\end{proof}

\begin{corollary}
\label{basic 2}If $T:X\rightrightarrows X^{\ast }$ is monotone and $%
range~T=T\left( 0_{X}\right) $, then%
\begin{equation}
T\subseteq \partial \left( \sigma _{cl~conv~T\left( 0_{X}\right) }\right)
_{|X}.  \label{incl 3}
\end{equation}
\end{corollary}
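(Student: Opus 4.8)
The plan is to reduce Corollary \ref{basic 2} to Proposition \ref{basic}. The hypothesis $range~T = T(0_X)$ is a strong condition: it says every $x^*$ that appears anywhere in the graph of $T$ already belongs to $T(0_X)$. In particular, since $range~T = \bigcup_{x} T(x) \supseteq T(x)$ for every $x \in dom~T$, this forces $T(x) \subseteq T(0_X)$ for all $x$, and also guarantees $0_X \in dom~T$ (assuming $T$ is nonempty). The natural idea is to pass to the inverse operator $T^{-1}$, because the role of ``zero in every image'' in Proposition \ref{basic} should translate, under inversion, into a statement about $T(0_X)$.

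First I would consider $T^{-1}: X^* \rightrightarrows X$. Since $T$ is monotone, so is $T^{-1}$ (monotonicity is symmetric in the two arguments of the duality product). Next I would check the hypothesis of Proposition \ref{basic} applied to $T^{-1}$, but with the roles of $X$ and $X^*$ swapped: I want $0_X \in \bigcap_{x^* \in dom~T^{-1}} T^{-1}(x^*)$. Now $dom~T^{-1} = range~T = T(0_X)$, and for any $x^* \in T(0_X)$ we have $(0_X, x^*) \in T$, i.e. $0_X \in T^{-1}(x^*)$. Hence indeed $0_X \in T^{-1}(x^*)$ for every $x^* \in dom~T^{-1}$, so the hypothesis is met. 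Applying Proposition \ref{basic} to $T^{-1}$ (interchanging $X$ and $X^*$, which is legitimate in the reflexive-free formulation only if one is careful about where the support functional lives) yields
\begin{equation*}
T^{-1} \subseteq N_{cl~conv~dom~T^{-1}} = N_{cl~conv~T(0_X)}.
\end{equation*}

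The final step is to translate this inclusion back through inversion and identify the normal cone operator of a set with a subdifferential. The key classical fact I would invoke is that the normal cone operator $N_C$ of a closed convex set $C$ is the inverse of the subdifferential of its support functional: since $\delta_C$ and $\sigma_C$ are mutually conjugate, their subdifferentials are inverse to each other, so $N_C = \partial \delta_C = (\partial \sigma_C)^{-1}$, which gives $N_C^{-1} = \partial \sigma_C$. Inverting both sides of $T^{-1} \subseteq N_{cl~conv~T(0_X)}$ then yields $T \subseteq (N_{cl~conv~T(0_X)})^{-1} = \partial \sigma_{cl~conv~T(0_X)}$, and restricting the support functional to $X$ accounts for the $_{|X}$ in the statement. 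This delivers \eqref{incl 3}.

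The main obstacle I anticipate is bookkeeping about the spaces, not the geometry. When one inverts $T$, the support functional $\sigma_{cl~conv~T(0_X)}$ lives on $X^{**}$ rather than on $X$, because $cl~conv~T(0_X) \subseteq X^*$; this is exactly why the statement writes the restriction $(\sigma_{cl~conv~T(0_X)})_{|X}$. I would need to verify carefully that applying Proposition \ref{basic} ``with $X$ and $X^*$ exchanged'' is valid even when $X$ is not reflexive, and that the resulting normal cone and subdifferential, after inversion and restriction to $X$, give precisely $\partial (\sigma_{cl~conv~T(0_X)})_{|X}$ as an operator from $X$ into $X^*$. Handling this duality bookkeeping—ensuring $0_X$ is treated as an element of $X \subseteq X^{**}$ and that the restriction is taken correctly—is the delicate part; the monotonicity and convex-geometry content is routine given Proposition \ref{basic}.
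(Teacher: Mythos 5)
Your proposal is correct and follows essentially the same route as the paper: pass to $T^{-1}$, verify that $0_{X}\in\dbigcap_{x^{\ast }\in dom~T^{-1}}T^{-1}\left( x^{\ast }\right)$ via $dom~T^{-1}=range~T=T\left( 0_{X}\right)$, apply Proposition \ref{basic} to $T^{-1}$ viewed as an operator into $X^{\ast \ast }$, and translate back through the conjugate duality between $\delta _{C}$ and $\sigma _{C}$. The only minor difference is that the paper avoids asserting the equality $N_{C}=\left( \partial \sigma _{C}\right) ^{-1}$ (which is delicate when $X$ is not reflexive) and instead uses only the one-sided inclusion $\left( \partial \delta _{C}\right) ^{-1}\left( x\right) \subseteq \partial \left( \sigma _{C}\right) _{|X}\left( x\right)$ for $x\in X$ — exactly the direction of the Fenchel--Young argument that always holds, and the bookkeeping point you correctly flagged as the delicate step.
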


\begin{proof}
The monotonicity of $T$ is equivalent to that of $T^{-1},$ and the
assumption $range~T=T\left( 0_{X}\right) $ is equivalent to the inclusion $%
0_{X}\in \dbigcap\limits_{x^{\ast }\in dom~T^{-1}}T^{-1}\left( x^{\ast
}\right) .$ Hence, using that%
\begin{equation*}
dom~T^{-1}=range~T=T\left( 0_{X}\right) ,
\end{equation*}%
Proposition \ref{basic}, applied to $T^{-1}$ (regarded as a set-valued
operator into $X^{\ast \ast }$), yields $T^{-1}\subseteq N_{cl~conv~T\left(
0_{X}\right) }=\partial \delta _{cl~conv~T\left( 0_{X}\right) }.$ Therefore,
for every $\left( x,x^{\ast }\right) \in T,$ we have%
\begin{equation*}
x\in T^{-1}\left( x^{\ast }\right) \subseteq \partial \delta
_{cl~conv~T\left( 0_{X}\right) }\left( x^{\ast }\right) .
\end{equation*}%
Thus, since $x\in X,$ we obtain%
\begin{equation*}
x^{\ast }\in \left( \partial \delta _{cl~conv~T\left( 0_{X}\right) }\right)
^{-1}\left( x\right) \subseteq \partial \left( \sigma _{cl~conv~T\left(
0_{X}\right) }\right) _{|X}\left( x\right) .
\end{equation*}%
This proves (\ref{incl 3}).
\end{proof}

\bigskip

Corollary \ref{basic 2} is to be compared to \cite[Theorem 1]{KK22}, which
establishes that a correspondence $T:\mathbb{R}^{n}\rightrightarrows \mathbb{%
R}^{n}$ (interpreted as assigning to each price vector $p$ a set of possible
production plans $T\left( p\right) $) is consistent with profit maximization
behavior, that is, there exists a convex closed production set $Y\subseteq 
\mathbb{R}^{n}$ such that for every price vector $p\in \mathbb{R}^{n}$ each
supply decision $z\in T\left( p\right) $ maximizes the scalar product $%
p\cdot y$ (i.e., the profit of producing $y$ under the given prices) subject
to $y\in Y$, if it satisfies the law of supply (i.e., it is monotone) and is
positively homogeneous of degree $0$ (i.e., $T\left( \lambda p\right)
=T\left( p\right) $ for every $p\in \mathbb{R}^{n}$ and $\lambda >0$).
Corollary \ref{basic 2} is simpler, as it does not require the homogeneity
condition;\ in its place, it has the assumption $range~T=T\left(
0_{X}\right) $, which would be an immediate consequence of positive
homogeneity of degree $0$ if imposing the mild extra hypothesis of $T^{-1}$
being closed-valued.

\begin{theorem}
\label{normal cones}Let $T:X\rightrightarrows X^{\ast }.$ There exists a
nonempty closed convex set\newline
$C\subseteq X$ such that $T=N_{C}$ if and only if $T$ is maximally monotone
and\newline
$0_{X^{\ast }}\in \dbigcap\limits_{x\in dom~T}T\left( x\right) .$
\end{theorem}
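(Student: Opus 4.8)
The plan is to prove both implications, with the sufficiency direction resting almost entirely on Proposition \ref{basic} combined with the maximality hypothesis. For the necessity direction, I would start from $T=N_{C}=\partial \delta _{C}$ with $C$ a nonempty closed convex set. Since $C$ is nonempty, closed and convex, $\delta _{C}$ is an l.s.c. proper convex functional, so by the cited maximal monotonicity of subdifferentials of such functionals \cite[Theorem B]{R70a} the operator $N_{C}=\partial \delta _{C}$ is maximally monotone. The intersection condition is then immediate: for every $x\in dom~T=dom~N_{C}=C$ one has $0_{X^{\ast }}\in N_{C}\left( x\right) $, because the normal cone at any point of $C$ always contains the origin (indeed $\left\langle y-x,0_{X^{\ast }}\right\rangle =0\leq 0$ for all $y\in C$). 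Hence $0_{X^{\ast }}\in \dbigcap\limits_{x\in dom~T}T\left( x\right) $, establishing both required properties.

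For the sufficiency direction, I would set $C:=cl~conv~dom~T$ and argue that it is a nonempty closed convex set. Closedness and convexity are built into the definition of $C$; nonemptiness follows from the observation that a maximally monotone operator cannot have empty graph, since the empty operator is properly contained in any singleton operator $\left\{ \left( x_{0},x_{0}^{\ast }\right) \right\} $, which is trivially monotone. Thus $dom~T\neq \emptyset $ and $C\neq \emptyset $.

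Since maximal monotonicity entails monotonicity and the hypothesis $0_{X^{\ast }}\in \dbigcap\limits_{x\in dom~T}T\left( x\right) $ is exactly what Proposition \ref{basic} requires, that proposition yields $T\subseteq N_{C}$. Now $N_{C}=\partial \delta _{C}$ is itself monotone (being a subdifferential of a proper convex functional, as guaranteed by the fact that $C\neq \emptyset $). I would then close the argument purely by maximality: $T$ is monotone and contained in the monotone operator $N_{C}$, so if the inclusion were proper, $T$ would be properly contained in a monotone operator, contradicting its maximal monotonicity. Therefore $T=N_{C}$, which is of the desired form.

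I do not expect a genuine obstacle here, as the two nontrivial facts---the containment $T\subseteq N_{C}$ and the maximal monotonicity of normal cone operators---are already available. The only points requiring care are the nonemptiness of $dom~T$ (so that $C$ is a legitimate nonempty set and $\delta _{C}$ is proper) and the clean separation of roles: Proposition \ref{basic} supplies one inclusion, while maximality of $T$ upgrades that inclusion to equality. The symmetry between the two directions---necessity uses maximal monotonicity of $N_{C}$, sufficiency uses maximal monotonicity of $T$---makes the proof short and self-contained.
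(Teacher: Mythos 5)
Your proof is correct and follows essentially the same route as the paper: the necessity direction is the ``immediate'' part (maximal monotonicity of $\partial \delta _{C}$ via \cite[Theorem B]{R70a} plus the trivial membership $0_{X^{\ast }}\in N_{C}\left( x\right) $ for $x\in C$), and the sufficiency direction applies Proposition \ref{basic} with $C:=cl~conv~dom~T$ and upgrades the inclusion $T\subseteq N_{C}$ to equality using the monotonicity of $N_{C}$ and the maximality of $T$. Your added checks (nonemptiness of $dom~T$ for a maximally monotone $T$, properness of $\delta _{C}$) are correct details that the paper leaves implicit.
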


\begin{proof}
The "only if" statement is immediate. The "if statement" follows from
Proposition \ref{basic}, since $N_{cl~conv~dom~T}$ is monotone.
\end{proof}

\bigskip

The following theorem is related to \cite[Theorem 2]{KK22} in a similar way
as Corollary \ref{basic 2} is related to \cite[Theorem 1]{KK22},

\begin{theorem}
\label{sublinear}Let $T:X\rightrightarrows X^{\ast }.$ There exists an
l.s.c. proper sublinear functional\newline
$s:X\rightarrow \mathbb{R\cup }\left\{ +\infty \right\} $ such that $%
T=\partial s$ if and only if $T$ is maximally monotone and $range~T=T\left(
0_{X}\right) .$
\end{theorem}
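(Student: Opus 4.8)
The plan is to handle the two implications separately, using Corollary \ref{basic 2} as the engine for the harder ``if'' direction and a short direct computation for the ``only if'' direction.

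For the ``only if'' direction, suppose $T=\partial s$ with $s$ l.s.c. proper sublinear. Maximal monotonicity of $T$ is then immediate from the classical result \cite[Theorem B]{R70a}, so the only thing requiring work is the identity $range~T=T\left( 0_{X}\right) $. Here I would exploit the way the subgradient inequality interacts with positive homogeneity: if $x^{\ast }\in \partial s\left( x\right) $, then inserting $y=0_{X}$ and $y=2x$ into $s\left( y\right) \geq s\left( x\right) +\left\langle y-x,x^{\ast }\right\rangle $ (and recalling $s\left( 0_{X}\right) =0$, since $s$ is proper) forces $\left\langle x,x^{\ast }\right\rangle =s\left( x\right) $; substituting this back gives $s\left( y\right) \geq \left\langle y,x^{\ast }\right\rangle $ for all $y\in X$, that is, $x^{\ast }\in \partial s\left( 0_{X}\right) $. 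Thus $\partial s\left( x\right) \subseteq \partial s\left( 0_{X}\right) $ for every $x$, whence $range~T=\bigcup_{x\in X}\partial s\left( x\right) =\partial s\left( 0_{X}\right) =T\left( 0_{X}\right) $.

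For the ``if'' direction, suppose $T$ is maximally monotone with $range~T=T\left( 0_{X}\right) $. Since $T$ is maximally monotone its domain is nonempty, so $T\left( 0_{X}\right) =range~T\neq \emptyset $ and $C:=cl~conv~T\left( 0_{X}\right) $ is a nonempty closed convex subset of $X^{\ast }$. I would then set $s:=\left( \sigma _{C}\right) _{|X}$ and verify that $s$ is l.s.c. proper sublinear: as a pointwise supremum of the continuous linear functionals $x\mapsto \left\langle x,x^{\ast }\right\rangle $ taken over $x^{\ast }\in C$, the functional $s$ is convex, l.s.c. and positively homogeneous, never takes the value $-\infty $, and satisfies $s\left( 0_{X}\right) =0<+\infty $, which secures properness. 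Corollary \ref{basic 2} then yields directly $T\subseteq \partial \left( \sigma _{C}\right) _{|X}=\partial s$. Since $\partial s$ is monotone (every subdifferential operator is) and $T$ is maximally monotone, the inclusion $T\subseteq \partial s$ forces $T=\partial s$, completing the argument.

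The structure is therefore almost entirely assembled from results already established, and I expect the only delicate point to be the bidual bookkeeping: $\sigma _{C}$ is a priori a functional on $X^{\ast \ast }$, so one must keep track of the canonical embedding of $X$ into $X^{\ast \ast }$ both when writing $\left( \sigma _{C}\right) _{|X}\left( x\right) =\sup_{x^{\ast }\in C}\left\langle x,x^{\ast }\right\rangle $ and when invoking Corollary \ref{basic 2}, which itself was obtained by applying Proposition \ref{basic} to $T^{-1}$ regarded as an operator into $X^{\ast \ast }$. Once that identification is made carefully, checking that $s$ is l.s.c. proper sublinear and that both reductions go through is routine, and no cyclic-monotonicity argument enters at any stage.
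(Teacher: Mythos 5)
Your proof is correct and follows essentially the same route as the paper: the ``if'' direction is exactly the paper's argument (apply Corollary \ref{basic 2} and use maximal monotonicity of $T$ against the monotonicity of $\partial\left(\sigma_{cl~conv~T\left(0_{X}\right)}\right)_{|X}$ to upgrade the inclusion to equality), and your explicit verification that $\partial s\left(x\right)\subseteq\partial s\left(0_{X}\right)$ merely fills in the ``only if'' step that the paper declares immediate. No discrepancies to report.
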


\begin{proof}
The "only if" statement is immediate. The "if statement" follows from
Corollary \ref{basic 2}, since $\partial \left( \sigma _{cl~conv~T\left(
0_{X}\right) }\right) _{|X}$ is monotone.
\end{proof}

\vspace*{0.2cm}

\section{General Subdifferential Operators}

\label{sec::apriori}

To a given operator $A:X\times \mathbb{R\rightrightarrows }X^{\ast }\times 
\mathbb{R},$ we associate another operator $A_{X}:X\mathbb{\rightrightarrows 
}X^{\ast },$ defined by%
\begin{equation*}
A_{X}\left( x\right) :=\Pi _{X^{\ast }}\left( \left(
\dbigcup\limits_{\lambda \in \mathbb{R}}A\left( x,\lambda \right) \right)
\cap \left( X^{\ast }\times \left\{ -1\right\} \right) \right) .
\end{equation*}%
This section begins with two simple lemmas.

\begin{lemma}
\label{char epigraphs}Let $C\subseteq X\times \mathbb{R}$. There exists an
l.s.c. proper convex functional $f:X\rightarrow \mathbb{R\cup }\left\{
+\infty \right\} $ such that%
\begin{equation}
C=epi~f,  \label{epi = C}
\end{equation}%
if and only if the following conditions hold:

i) $C$ is nonempty, convex and closed,

ii)%
\begin{equation*}
\left( barr\left( C\right) \right) \cap \left( X^{\ast }\times \left\{
-1\right\} \right) \neq \emptyset ,
\end{equation*}

iii)%
\begin{equation*}
\left( 0_{X},1\right) \in 0^{+}\left( C\right) .
\end{equation*}
\end{lemma}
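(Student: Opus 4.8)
The plan is to shuttle between the set $C$ and a candidate function by means of the infimal projection
$f(x):=\inf\{\alpha\in\mathbb{R}:(x,\alpha)\in C\}$, with the convention $\inf\emptyset=+\infty$, and to match each of i)--iii) with one of the defining properties of an epigraph. For the "only if" direction I would assume $C=epi~f$ with $f$ l.s.c. proper convex and read off the three conditions directly: i) is the standard fact that such an epigraph is nonempty (properness), convex (convexity) and closed (lower semicontinuity); iii) is immediate, since $(x,\alpha)+\lambda(0_{X},1)=(x,\alpha+\lambda)\in epi~f$ whenever $(x,\alpha)\in epi~f$ and $\lambda\geq0$, so $(0_{X},1)\in 0^{+}(C)$. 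For ii) I would use the existence of a continuous affine minorant of $f$ (by Hahn--Banach separation of $epi~f$ from a point below it, or equivalently since $f^{\ast}$ is proper by Fenchel--Moreau): there is $x^{\ast}\in X^{\ast}$ and $\beta\in\mathbb{R}$ with $f(x)\geq\langle x,x^{\ast}\rangle-\beta$ for all $x$, so $\langle x,x^{\ast}\rangle-\alpha\leq\beta$ for every $(x,\alpha)\in C$, which is exactly $(x^{\ast},-1)\in barr(C)$ once one reads off the duality pairing $\langle(x,\alpha),(x^{\ast},t)\rangle=\langle x,x^{\ast}\rangle+t\alpha$ on $X\times\mathbb{R}$.

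For the converse I would assume i)--iii), define $f$ as above, and first establish $C=epi~f$. The inclusion $C\subseteq epi~f$ is immediate from the definition of the infimum. For $epi~f\subseteq C$, take $(x,\alpha)$ with $f(x)\leq\alpha$ and choose $(x,\beta_{n})\in C$ with $\beta_{n}\to f(x)$, so that $\beta_{n}\geq f(x)$. If $f(x)<\alpha$, then $\beta_{n}\leq\alpha$ for large $n$, and condition iii) gives $(x,\alpha)=(x,\beta_{n})+(\alpha-\beta_{n})(0_{X},1)\in C$. If instead $f(x)=\alpha$, then $(x,\beta_{n})\to(x,\alpha)$, and the closedness of $C$ from i) yields $(x,\alpha)\in C$. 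Hence $C=epi~f$.

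It then remains to verify that $f$ is l.s.c. proper convex. Lower semicontinuity is equivalent to $epi~f=C$ being closed and convexity to $epi~f=C$ being convex, both supplied by i). For properness, nonemptiness of $C$ produces a point $(x_{0},\alpha_{0})\in C$, whence $f(x_{0})\leq\alpha_{0}<+\infty$, so $f\not\equiv+\infty$; and writing $M:=\sup_{(x,\alpha)\in C}(\langle x,x^{\ast}\rangle-\alpha)<+\infty$ for the functional $(x^{\ast},-1)\in barr(C)$ furnished by ii), one gets $\alpha\geq\langle x,x^{\ast}\rangle-M$ on $C$, hence $f(x)\geq\langle x,x^{\ast}\rangle-M>-\infty$ for every $x$, so $f$ takes values in $\mathbb{R}\cup\{+\infty\}$. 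The step I expect to be most delicate is exactly this orchestration of the three hypotheses: iii) gives the "vertical" epigraph shape and fills in points strictly above the infimum, i) supplies the boundary value $f(x)=\alpha$ through closedness, and ii) is precisely what excludes the value $-\infty$ and thereby promotes $f$ to a genuine proper function. Getting the case split $f(x)<\alpha$ versus $f(x)=\alpha$ right, and correctly identifying the dual pairing on $X\times\mathbb{R}$ underlying the barrier-cone condition, are the only points that require care.
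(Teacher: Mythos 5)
Your proof is correct and follows essentially the same route as the paper's: the infimal projection $f(x)=\inf\{\lambda:(x,\lambda)\in C\}$, with ii) supplying the affine minorant that rules out the value $-\infty$, iii) filling in the points strictly above the infimum, and closedness from i) capturing the boundary case. The only cosmetic difference is that you split $epi~f\subseteq C$ into the cases $f(x)<\alpha$ and $f(x)=\alpha$, whereas the paper handles both at once by first placing every $(x,\mu)$ with $\mu>\lambda$ in $C$ and then letting $\mu\rightarrow\lambda^{+}$.
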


\begin{proof}
\textit{Only if. }Conditions i) and iii) are immediate. Condition ii)
follows from the fact that $\left( x^{\ast },-1\right) \in barr\left(
C\right) $ for every continuous affine minorant $\left\langle \cdot ,x^{\ast
}\right\rangle +b$ of $f.$

\textit{If. }Define $f:X\rightarrow \mathbb{R\cup }\left\{ +\infty ,-\infty
\right\} $ by%
\begin{equation*}
f\left( x\right) :=\inf \left\{ \lambda \in \mathbb{R}:\left( x,\lambda
\right) \in C\right\} .
\end{equation*}%
From i) and ii), it easily follows that $f$ is minorized by a continuous
affine functional , hence $f\left( x\right) >-\infty $ for every $x\in X.$
It is also clear that $C\subseteq epi~f,$ which, since $C\neq \emptyset ,$
implies that $f$ is proper. To see that the opposite inclusion also holds,
let $\left( x,\lambda \right) \in epi~f.$ Then, for every $\mu >\lambda ,$
there exists $\lambda ^{\prime }<\mu $ such that $\left( x,\lambda ^{\prime
}\right) \in C;$ hence, by iii), $\left( x,\mu \right) \in C.$ Letting $\mu
\rightarrow \lambda ^{+},$ we obtain that $\left( x,\lambda \right) \in C,$
since $C$ is closed according to i). We have thus proved (\ref{epi = C}).
\end{proof}

\begin{lemma}
\label{lambda = f(x)}Let $f:X\rightarrow \mathbb{R\cup }\left\{ +\infty
\right\} $ and $\left( x,\lambda \right) \in X\times \mathbb{R}.$ If $\left(
x^{\ast },-1\right) \in N_{epi~f}\left( x,\lambda \right) ,$ then $\lambda
=f\left( x\right) .$
\end{lemma}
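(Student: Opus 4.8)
The plan is to unwind the definition of the normal cone to $epi~f$ and then to evaluate the defining inequality at one cleverly chosen point of $epi~f$. Recall that $N_{epi~f}=\partial \delta _{epi~f}$, and that the dual of $X\times \mathbb{R}$ is $X^{\ast }\times \mathbb{R}$ under the pairing $\left\langle \left( y,\mu \right) ,\left( y^{\ast },t\right) \right\rangle =\left\langle y,y^{\ast }\right\rangle +\mu t$. Thus the hypothesis $\left( x^{\ast },-1\right) \in N_{epi~f}\left( x,\lambda \right) $ encodes two facts simultaneously: first, that $\left( x,\lambda \right) \in epi~f$, i.e. $f\left( x\right) \leq \lambda $; and second, that $\left\langle y-x,x^{\ast }\right\rangle -\left( \mu -\lambda \right) \leq 0$ for every $\left( y,\mu \right) \in epi~f$.

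The first fact already supplies one of the two inequalities I want, namely $f\left( x\right) \leq \lambda $; in particular $f\left( x\right) \in \mathbb{R}$, so the point $\left( x,f\left( x\right) \right) $ is a bona fide element of $epi~f$. My intention is to feed precisely this point into the normal cone inequality. Taking $\left( y,\mu \right) =\left( x,f\left( x\right) \right) $ there, the term $\left\langle y-x,x^{\ast }\right\rangle $ vanishes and what survives is $-\left( f\left( x\right) -\lambda \right) \leq 0$, that is, $f\left( x\right) \geq \lambda $. Combining this with $f\left( x\right) \leq \lambda $ yields $\lambda =f\left( x\right) $, as claimed.

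There is essentially no deep step here; the only matters requiring care are bookkeeping ones. I must fix the sign convention of the pairing on $X\times \mathbb{R}$ correctly, so that the $-1$ in the second coordinate produces the inequality in the direction $f\left( x\right) \geq \lambda $ rather than its opposite, and I must justify that $\left( x,f\left( x\right) \right) $ genuinely lies in $epi~f$, which is where the finiteness of $f\left( x\right) $—itself a by-product of $\left( x,\lambda \right) \in epi~f$—enters. Once these are in place the conclusion is immediate; it is exactly the normalizing role of the $-1$ in the second coordinate that pins $\lambda $ to $f\left( x\right) $, and this is the feature that will later make the lemma useful for recovering $f$ from the normal cone of its epigraph.
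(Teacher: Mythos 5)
Your proof is correct and is essentially identical to the paper's: both deduce $(x,\lambda)\in epi~f$ from the nonemptiness of the normal cone, hence $f(x)\leq\lambda<+\infty$, and then test the normal cone inequality at the point $(x,f(x))\in epi~f$ to get the reverse inequality $\lambda\leq f(x)$. No differences worth noting.
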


\begin{proof}
If $\left( x^{\ast },-1\right) \in N_{epi~f}\left( x,\lambda \right) ,$ then 
$\left( x,\lambda \right) \in epi~f,$ since otherwise $N_{epi~f}\left(
x,\lambda \right) $ would be empty. It follows that $f\left( x\right) \leq
\lambda <+\infty ,$ and hence $\left( x,f\left( x\right) \right) \in epi~f.$
This inclusion, together with $\left( x^{\ast },-1\right) \in
N_{epi~f}\left( x,\lambda \right) ,$ yields%
\begin{equation*}
\left\langle \left( x,f\left( x\right) \right) -\left( x,\lambda \right)
,\left( x^{\ast },-1\right) \right\rangle \leq 0,
\end{equation*}%
which simply means that $\lambda \leq f\left( x\right) ,$ thus proving that $%
\lambda =f\left( x\right) .$
\end{proof}

\bigskip

The following corollary is an easy consequence of Lemma \ref{lambda = f(x)}.

\begin{corollary}
\label{nornal epi}If $f:X\rightarrow \mathbb{R\cup }\left\{ +\infty \right\} 
$ is convex and l.s.c., then $\left( N_{epi~f}\right) _{X}=\partial f.$
\end{corollary}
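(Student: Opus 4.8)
The plan is to prove the two inclusions that together give $\left( N_{epi~f}\right) _{X}=\partial f$. Fix $f$ convex and l.s.c.; if $f\equiv +\infty $ both sides are empty, so I may assume $f$ is proper, in which case $epi~f$ is a nonempty closed convex set. Unwinding the definition of $A_{X}$ applied to $A=N_{epi~f}$, the statement $x^{\ast }\in \left( N_{epi~f}\right) _{X}\left( x\right) $ means exactly that there exists $\lambda \in \mathbb{R}$ with $\left( x^{\ast },-1\right) \in N_{epi~f}\left( x,\lambda \right) $. The corollary therefore amounts to matching this condition against the subgradient inequality $f\left( y\right) \geq f\left( x\right) +\left\langle y-x,x^{\ast }\right\rangle $ for all $y\in X$.

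For the inclusion $\partial f\subseteq \left( N_{epi~f}\right) _{X}$, I would take $x^{\ast }\in \partial f\left( x\right) $, which forces $f\left( x\right) <+\infty $, and exhibit the witness $\lambda =f\left( x\right) $. Concretely I would verify $\left( x^{\ast },-1\right) \in N_{epi~f}\left( x,f\left( x\right) \right) $ directly: for any $\left( z,\mu \right) \in epi~f$ one has $\mu \geq f\left( z\right) \geq f\left( x\right) +\left\langle z-x,x^{\ast }\right\rangle $, which rearranges to $\left\langle \left( z,\mu \right) -\left( x,f\left( x\right) \right) ,\left( x^{\ast },-1\right) \right\rangle \leq 0$, precisely the defining inequality for the normal cone at $\left( x,f\left( x\right) \right) $. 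Hence $x^{\ast }\in \left( N_{epi~f}\right) _{X}\left( x\right) $.

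For the reverse inclusion, I would take $x^{\ast }\in \left( N_{epi~f}\right) _{X}\left( x\right) $, choose $\lambda $ with $\left( x^{\ast },-1\right) \in N_{epi~f}\left( x,\lambda \right) $, and immediately invoke Lemma \ref{lambda = f(x)} to conclude $\lambda =f\left( x\right) $. This is the step carrying the real content, as it pins down the height coordinate with no further work. With $\lambda =f\left( x\right) $ in hand, I would test the normal-cone inequality at the points $\left( y,f\left( y\right) \right) $ for arbitrary $y\in dom~f$, obtaining $\left\langle y-x,x^{\ast }\right\rangle -\left( f\left( y\right) -f\left( x\right) \right) \leq 0$, i.e. the subgradient inequality; for $y\notin dom~f$ it holds trivially. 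Thus $x^{\ast }\in \partial f\left( x\right) $.

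I do not expect a genuine obstacle: the construction $A_{X}$ is designed exactly to read off the slopes $\left( x^{\ast },-1\right) $ of supporting hyperplanes to $epi~f$, and Lemma \ref{lambda = f(x)} already disposes of the one delicate point, namely that such a supporting hyperplane can only touch $epi~f$ at height $f\left( x\right) $. What remains is the routine translation between a supporting hyperplane of $epi~f$ with normal $\left( x^{\ast },-1\right) $ and an affine minorant of $f$ of slope $x^{\ast }$ that is exact at $x$.
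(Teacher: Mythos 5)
Your proof is correct and follows exactly the route the paper intends: the author gives no explicit argument for this corollary, remarking only that it is an easy consequence of Lemma \ref{lambda = f(x)}, and your write-up supplies precisely that missing routine verification (the lemma pins down $\lambda =f\left( x\right) $, and the rest is the standard equivalence between $\left( x^{\ast },-1\right) \in N_{epi~f}\left( x,f\left( x\right) \right) $ and $x^{\ast }\in \partial f\left( x\right) $). No issues.
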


The next result gives sufficient conditions for the operator $A_{X}:X\mathbb{%
\rightrightarrows }X^{\ast }$ induced by a monotone operator $A:X\times 
\mathbb{R\rightrightarrows }X^{\ast }\times \mathbb{R}$ to be included in
the subdifferential operator of an l.s.c. convex functional .

\begin{proposition}
\label{basic 3}If $A:X\times \mathbb{R\rightrightarrows }X^{\ast }\times 
\mathbb{R}$ is monotone and satisfies:

i)%
\begin{equation*}
\left( barr\left( cl~conv~dom~A\right) \right) \cap \left( X^{\ast }\times
\left\{ -1\right\} \right) \neq \emptyset ,
\end{equation*}

ii)

\begin{equation*}
\left( 0_{X},1\right) \in 0^{+}\left( cl~conv~dom~A\right)
\end{equation*}%
and

iii)%
\begin{equation*}
\left( 0_{X^{\ast }},0\right) \in \dbigcap\limits_{\left( x,\lambda \right)
\in dom~A}A\left( x,\lambda \right) ,
\end{equation*}%
then the functional $f:X\rightarrow \mathbb{R\cup }\left\{ +\infty \right\} $
given by $epi~f=$ $cl~conv~dom~A$ is well defined and satisfies%
\begin{equation}
A_{X}\subseteq \partial f.  \label{incl 5}
\end{equation}
\end{proposition}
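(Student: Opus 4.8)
The plan is to first show that the functional $f$ is well defined, and then establish the inclusion \eqref{incl 5} by reducing it to the normal-cone machinery developed earlier. The key observation is that conditions i), ii) together with the fact that $cl~conv~dom~A$ is nonempty, convex and closed are exactly the hypotheses of Lemma~\ref{char epigraphs} applied to $C := cl~conv~dom~A$. Therefore Lemma~\ref{char epigraphs} immediately guarantees the existence of an l.s.c. proper convex functional $f$ with $epi~f = cl~conv~dom~A$, so $f$ is well defined. This disposes of the first assertion with essentially no work beyond verifying that $C$ is nonempty (which follows from $dom~A \neq \emptyset$ under condition iii), unless $A$ is trivial) and closed/convex (automatic from the $cl~conv$ operation).

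For the inclusion, the strategy is to compare $A$ with the normal cone operator $N_{epi~f}$ and then pass to the induced operators via Corollary~\ref{nornal epi}. First I would observe that condition iii) says $\left(0_{X^{\ast}},0\right) \in A\left(x,\lambda\right)$ for every $\left(x,\lambda\right) \in dom~A$, which is precisely the hypothesis $0 \in \bigcap_{\left(x,\lambda\right)\in dom~A} A\left(x,\lambda\right)$ needed to apply Proposition~\ref{basic} to the monotone operator $A$ on the product space $X \times \mathbb{R}$ (whose dual is $X^{\ast} \times \mathbb{R}$). Proposition~\ref{basic} then yields
\begin{equation*}
A \subseteq N_{cl~conv~dom~A} = N_{epi~f}.
\end{equation*}
Here the identification $cl~conv~dom~A = epi~f$ is exactly the defining relation for $f$, so the normal cone on the right is taken with respect to the epigraph.

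Once $A \subseteq N_{epi~f}$ is in hand, the final step is to apply the induced-operator construction $(\cdot)_X$ to both sides. Since the passage $A \mapsto A_X$ is monotone with respect to inclusion (intersecting with $X^{\ast}\times\{-1\}$ and projecting onto $X^{\ast}$ both preserve inclusions), we obtain
\begin{equation*}
A_X \subseteq \left(N_{epi~f}\right)_X.
\end{equation*}
By Corollary~\ref{nornal epi}, which applies because $f$ is convex and l.s.c., the right-hand side equals $\partial f$, giving $A_X \subseteq \partial f$, which is \eqref{incl 5}.

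The main obstacle, such as it is, lies not in any single deep step but in correctly matching the hypotheses to the two prior results and keeping the bookkeeping of the product space straight: one must be careful that the duality pairing on $X \times \mathbb{R}$ is the product pairing, so that $\left(x^{\ast},-1\right)$ acting on $\left(y,\mu\right)-\left(x,\lambda\right)$ reproduces the inequality structure exploited in Lemma~\ref{lambda = f(x)} and Corollary~\ref{nornal epi}. Conditions i) and ii) are used only to invoke Lemma~\ref{char epigraphs} and thereby ensure $f$ is a genuine l.s.c. proper convex functional, while condition iii) is the sole ingredient powering the inclusion $A \subseteq N_{epi~f}$ through Proposition~\ref{basic}. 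The cleanliness of the argument rests on the earlier lemmas having already absorbed all the technical content.
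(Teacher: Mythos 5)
Your proposal is correct and follows essentially the same route as the paper: Lemma \ref{char epigraphs} for well-definedness of $f$, Proposition \ref{basic} applied on $X\times\mathbb{R}$ via condition iii) to get $A\subseteq N_{epi~f}$, and then passage to the induced operators. The only cosmetic difference is that you package the last step as $A_{X}\subseteq\left(N_{epi~f}\right)_{X}=\partial f$ using Corollary \ref{nornal epi} and the inclusion-monotonicity of $(\cdot)_{X}$, whereas the paper unfolds Lemma \ref{lambda = f(x)} directly; the content is identical.
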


\begin{proof}
By Lemma \ref{char epigraphs}, $f$ is indeed well defined. Since $A$ is
monotone, by iii), Proposition \ref{basic} and (\ref{epi = C}), we have%
\begin{equation}
A\subseteq N_{epi~f}.  \label{incl 4}
\end{equation}

Let $\left( x,x^{\ast }\right) \in A_{X}.$ Then, by (\ref{incl 4}) and Lemma %
\ref{lambda = f(x)}, we have%
\begin{equation*}
\left( x^{\ast },-1\right) \in \dbigcup\limits_{\lambda \in \mathbb{R}%
}N_{epi~f}\left( x,\lambda \right) =N_{epi~f}\left( x,f\left( x\right)
\right) ,
\end{equation*}%
and hence $x^{\ast }\in \partial f\left( x\right) .$ This proves (\ref{incl
5}).
\end{proof}

\bigskip

The following result is the main one in this paper. It characterizes
subdifferential operators of general l.s.c. proper convex functionals within
the class of maximally monotone operators. Unlike the classical
characterization \cite[Theorem B]{R70a}, the new one does not involve cyclic
monotonicity.

\begin{theorem}
\label{main}Let $T:X\rightrightarrows X^{\ast }.$ There exists an l.s.c.
proper convex functional $f:X\rightarrow \mathbb{R\cup }\left\{ +\infty
\right\} $ such that $T=\partial f$ if and only if $T$ is maximally monotone
and there exists a monotone operator $A:X\times \mathbb{R\rightrightarrows }%
X^{\ast }\times \mathbb{R}$ satisfying conditions i) - iii) of Proposition %
\ref{basic 3} such that $T=A_{X}.$
\end{theorem}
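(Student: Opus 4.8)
The plan is to prove both directions of the equivalence. The "if" direction should be the easier one, essentially a packaging of the machinery already built in Section 3. Assume $T$ is maximally monotone and that there is a monotone $A:X\times\mathbb{R}\rightrightarrows X^{\ast}\times\mathbb{R}$ satisfying i)--iii) of Proposition \ref{basic 3} with $T=A_{X}$. Then Proposition \ref{basic 3} directly yields a well-defined l.s.c. proper convex functional $f$ (with $epi~f = cl~conv~dom~A$) such that $T = A_{X}\subseteq\partial f$. Now $\partial f$ is itself monotone (it is in fact maximally monotone by \cite[Theorem B]{R70a}, but plain monotonicity suffices here), so $T\subseteq\partial f$ together with the maximal monotonicity of $T$ forces $T=\partial f$. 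That closes the "if" direction.

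For the "only if" direction, assume $T=\partial f$ for some l.s.c. proper convex $f$. Maximal monotonicity of $T$ is immediate from \cite[Theorem B]{R70a}. The remaining task is to exhibit a monotone operator $A$ on $X\times\mathbb{R}$ satisfying i)--iii) with $A_{X}=T$. The natural candidate, suggested by Corollary \ref{nornal epi}, is $A:=N_{epi~f}$. Indeed, Corollary \ref{nornal epi} gives exactly $(N_{epi~f})_{X}=\partial f=T$, so the condition $T=A_{X}$ holds by construction. Normal cone operators are monotone, so $A$ is monotone. It then remains to verify conditions i)--iii) for this choice of $A$, and the key observation is that $dom~A=dom~N_{epi~f}\subseteq epi~f$, while $epi~f$ is already nonempty, closed and convex, so $cl~conv~dom~A\subseteq epi~f$.

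The verification of i)--iii) is where I expect the real (though modest) work to lie, because conditions i) and ii) are phrased in terms of $cl~conv~dom~A$ rather than $epi~f$ itself, and these two sets need not coincide ($dom~N_{epi~f}$ is typically only the boundary of $epi~f$, or the points admitting a nonzero normal). For iii), note that $\left(0_{X^{\ast}},0\right)\in N_{epi~f}(x,\lambda)$ for every $(x,\lambda)\in epi~f$, since the zero functional is trivially a normal vector at any point of a convex set; hence $\left(0_{X^{\ast}},0\right)$ lies in $A(x,\lambda)$ for every $(x,\lambda)\in dom~A$, giving iii). For ii) I would show $(0_{X},1)\in 0^{+}(cl~conv~dom~A)$ by using $cl~conv~dom~A\subseteq epi~f$ and the fact that moving upward in the $\lambda$-coordinate keeps one inside $epi~f$; the containment direction then has to be checked to confirm the recession vector survives in the smaller set, which is the most delicate point. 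For i), since $T=\partial f$ is nonempty (as $f$ is proper, l.s.c. and convex, it has nonempty subdifferential at some point), pick any $(x_{0},x_{0}^{\ast})\in\partial f$; then $(x_{0}^{\ast},-1)\in N_{epi~f}(x_{0},f(x_{0}))$, so $(x_{0},f(x_{0}))\in dom~A$ and $(x_{0}^{\ast},-1)\in barr(cl~conv~dom~A)$, which is exactly condition i).

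The main obstacle I anticipate is confirming that conditions i) and ii), which are stated for $cl~conv~dom~A$, genuinely transfer from the corresponding (obvious) properties of $epi~f$, given only the inclusion $cl~conv~dom~A\subseteq epi~f$ rather than equality. One clean way to sidestep this difficulty entirely is to replace the candidate $A=N_{epi~f}$ by an operator whose domain is all of $epi~f$: for instance, augment $N_{epi~f}$ so that $A(x,\lambda)\ni\left(0_{X^{\ast}},0\right)$ for every $(x,\lambda)\in epi~f$ while still having $A_{X}=T$ and $A$ monotone. With such an $A$ one gets $dom~A=epi~f$, hence $cl~conv~dom~A=epi~f$, and then i)--iii) read off directly from Lemma \ref{char epigraphs} applied to $epi~f$. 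I would pursue this latter route, as it reduces the whole "only if" direction to the already-established Lemma \ref{char epigraphs} and Corollary \ref{nornal epi}.
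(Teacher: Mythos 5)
Your proof is correct and follows essentially the same route as the paper: for the ``only if'' direction take $A:=N_{epi~f}$ and invoke Corollary \ref{nornal epi}, and for the ``if'' direction combine Proposition \ref{basic 3} with the maximal monotonicity of $T$ and the monotonicity of $\partial f$. The only remark worth making is that the ``delicate point'' you anticipate does not exist: since $\left( 0_{X^{\ast }},0\right) \in N_{epi~f}\left( x,\lambda \right) $ for \emph{every} $\left( x,\lambda \right) \in epi~f$ (the very fact you use to verify iii)), the normal cone is nonempty at every point of $epi~f$, so $dom~N_{epi~f}=epi~f$ exactly --- not merely the boundary or the points with a nonzero normal --- and hence $cl~conv~dom~A=epi~f$, so that i) and ii) are read off directly from Lemma \ref{char epigraphs}. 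Your proposed ``augmentation'' of $N_{epi~f}$ by the zero normal is therefore a no-op, and your fallback route coincides with the paper's proof.
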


\begin{proof}
To prove the "only if" statement, take $A:=N_{epi~f}.$ Since $%
dom~N_{epi~f}=epi~f$ and $epi~f$ is convex and closed, conditions i) and ii)
follow from Lemma \ref{char epigraphs}, whereas iii) is immediate. Moreover,
by Corollary \ref{nornal epi}, we have $\partial f=A_{X}.$

The "if statement" is an immediate consequence of Proposition \ref{basic 3},
since $\partial f$ is monotone.
\end{proof}

\vspace*{0.2cm}

\textbf{Acknowledgments}

I acknowledge financial support from the Spanish Ministry of Economy and
Competitiveness, through Grant PGC2018-097960-B-C21 and the Severo Ochoa
Programme for Centres of Excellence in R\&D (SEV-2015-0563). I am affiliated
with MOVE (Markets, Organizations and Votes in Economics). I am grateful to
Alex Kruger for his careful reading of this paper and many useful comments
(including his observation that Lemma \ref{lambda = f(x)} does not require
any assumption on $f$), which have helped me to improve the presentation.


\begin{thebibliography}{99}
\bibitem{BP78} V. Barbu, Th. Precupanu, \emph{Convexity and optimization on
Banach spaces}, Sijthoff \& Noordhoff, Alphen aan de Rijn, 1978.

\bibitem{BC17} H. H. Bauschke, P. L. Combettes, \emph{Convex analysis and
monotone operator theory in Hilbert spaces}, Springer, Cham, 2017.

\bibitem{BI08} R. S. Burachik, A. N. Iusem, \emph{Set-valued mappings and
enlargement of monotone operators}, Springer, New York, 2008.

\bibitem{KK22} A. Kushnir, V. Krishnamoorthy, \emph{A simple
characterization of supply correspondences}, arXiv:2205.10472v1, 2022.

\bibitem{K78} A. G. Kusraev, \emph{Subdifferential mappings of convex
operators} (Russian), Optimizatsiya 21 (1978), 36--40.

\bibitem{MN22} B. S. Mordukhovich, N. M. Nam, \emph{Convex analysis and
beyond. Volume I. Basic theory}, Springer, Cham, 2022.

\bibitem{R66} R. T. Rockafellar, \emph{Characterization of the
subdifferentials of convex functions}, Pac. J. Math. 17 (1966), 497-510.

\bibitem{R70} R. T. Rockafellar, \emph{Convex analysis}, Princeton
University Press, Princeton, 1970.

\bibitem{R70a} R. T. Rockafellar, \emph{On the maximal monotonicity of
subdifferential mappings}, Pac. J. Math. 33 (1970), 209-216.

\bibitem{S08} S. Simons, \emph{From Hahn--Banach to monotonicity}, Springer,
Berlin, 2008.

\bibitem{Z02} C. Z\u{a}linescu, \emph{Convex analysis in general vector
spaces}, World Scientific, Singapore, 2002.
\end{thebibliography}
\end{document}